\newcommand{\R}{\mathbb{R}}
\DeclareMathOperator{\im}{im}
\DeclareMathOperator{\spa}{span}
\DeclareMathOperator{\for}{\:for\:}
\DeclareMathOperator*{\argmax}{arg\,max}
\newcommand{\seq}[1]{(#1)_{n=1}^\infty}
\newcommand{\projseq}{\seq{(P_1 P_2)^n}}
\newcommand\normed[1]{\frac{#1}{\|#1\|}}
\newtheorem{lemma}{Lemma}
\newtheorem{proposition}{Proposition}
\theoremstyle{remark}
\newtheorem{remark}{Remark}
\newtheorem{example}{Example}
\begin{document}
\title{On angles, projections and iterations}
\author[1]{Christian Bargetz\footnote{christian.bargetz@uibk.ac.at}}
\author[2]{Jona Klemenc\footnote{jona.klemenc@uni-bonn.de}}
\author[3]{Simeon Reich\footnote{sreich@technion.ac.il}}
\author[ ]{Natalia~Skorokhod}
\affil[1]{University of Innsbruck}
\affil[2]{Rheinische Friedrich-Wilhelms-Universität Bonn}
\affil[3]{The Technion---Israel Institute of Technology}
\date{19th May 2020}

\DeclarePairedDelimiter\abs{\lvert}{\rvert}

\maketitle
\begin{abstract}
  We investigate connections between the geometry of linear subspaces and the convergence of the alternating projection method for linear projections. The aim of this article is twofold: in the first part, we show that even in Euclidean spaces the convergence of the alternating method is not determined by the principal angles between the subspaces involved. In the second part, we investigate the properties of the Oppenheim angle between two linear projections. We discuss, in particular, the question of existence and uniqueness of ``consistency projections'' in this context.
  \vskip2mm
  \noindent\textbf{Mathematics Subject Classification (2020).} 47J25, 46C05
  \vskip2mm
  \noindent\textbf{Keywords.} Alternating projection method, linear projections, principal angles, Oppenheim angle  
\end{abstract}

\section{Introduction}

The interest in the convergence of sequences of iterates of projections of various types goes back at least 
to the mid-twentieth century. J.~von~Neumann's article~\cite{Neumann1949} from 1949 can be considered one 
of the starting points of these investigations. In this article he shows that given a Hilbert space~$H$ 
and two closed subspaces $M, N\subset H$, with corresponding orthogonal projections $P_M$ and $P_N$, 
respectively, the sequence defined by 
\[
  x_0 \in H \qquad x_{2n+1} = P_Mx_{2n}\quad\text{and}\quad x_{2n+2} = P_Nx_{2n+1}
\]
converges in norm to $P_{M\cap N}x_0$ for every initial point $x_0 \in H$. An elementary geometric proof of 
von Neumann's theorem can be found in \cite{KR2004} This result was later generalised to 
the case of more than two subspaces by I.~Halperin in~\cite{Halperin}. In~\cite{Kayalar1988} 
S.~Kayalar and H.~L.~Weinert showed that the speed of convergence is determined by the Friedrichs numbers 
between the subspaces involved. This can be considered a geometric condition controlling the convergence 
behaviour.

Note that in all these cases the order in which the projections are iterated is of crucial importance. 
In~\cite{AmemiyaAndo}, I.~Amemiya and I.~Ando asked the question of whether convergence in norm can always be 
achieved provided that each projection appears infinitely often. This question was finally answered negatively 
by E.~Kopeck\'{a} and A.~Paszkiewicz in~\cite{KP2017:StrangeProducts}, where they give an example of three 
subspaces and an iteration order without convergence in norm. More information concerning this phenomenon can be 
found in \cite{K2019} and \cite{KM2014}. 

In Banach spaces, there are at least two natural generalisations of orthogonal projections: metric projections 
and linear projections. For the first one, the image is the point inside the subspace minimising the distance 
to the argument. It turns out that for iterations of metric projections, one cannot expect convergence 
of the iterates to the metric projection onto the intersection; see, for example,~\cite{Stiles}.

Recall that a linear mapping $P$ on a Banach space $X$ is called a \emph{linear projection} if it satisfies the condition 
that $P^2=P$. In this case, there are many positive results under the additional assumption that the projections
are of norm one. For example, if the Banach space~$X$ is uniformly convex, convergence of iterates of norm-one 
projections was established by R.~E.~Bruck and S.~Reich in~\cite{Reich}. This result was later generalised, 
for example, to further classes of Banach spaces by C.~Badea and Y.~I.~Lyubich in~\cite{BL2010:Geometric}. 
A dichtotomy for the speed of convergence of iterations of projections in Banach spaces which are 
uniformly convex of some power type has been exhibited by C.~Badea and D.~Seifert in~\cite{BS2017:Quantified}. 
More results on the convergence of the alternating algorithm for norm-one projections can be found 
in~\cite{DJRSSS2017:Nonoptimality}.

In the context of property~(T) for certain groups, I.~Oppenheim introduced in~\cite{Oppenheim2017a} 
an angle between linear projections in Banach spaces. This concept was developed further 
in~\cite{Oppenheim2017}, where a number of sufficient conditions for convergence of iterates of projections 
are given.

Iterations of non-orthogonal projections in Hilbert spaces, which then necessarily have a norm which is larger 
than one, are of interest in the context of discrete linear inclusions and of Skorokhod problems; 
see, for example,~\cite{KV2001:Polyhedral, HYPERPLANES}.

Since in most of the above results the convergence behaviour of the iterates of projections in determined 
or at least influenced by some kind of angle, one might hope that for non-orthogonal projections in Hilbert 
spaces the situation could be similar. More precisely, in the case of two linear projections, 
these projections are determined by two subspaces each---the range and the kernel. Moreover, in the case of 
Euclidean spaces, the concept of principal angles allows to determine the relative position of two subspaces 
up to an isometry. Therefore one could hope that these data might determine the convergence of the iterates 
of these projections.

The aim of this article is twofold: in the first part, we show that even in Euclidean spaces the convergence 
of the iterates is not determined by the principal angles between the subspaces involved. In the second part, 
we investigate the properties of the Oppenheim angle between two linear projections and provide an example 
which shows that the modification of the definition of this angle introduced in~\cite{Oppenheim2017} is indeed 
necessary.

\section{Preliminaries and Notation}

\subsection{Principal Angles}

Principal angles are used to describe the geometric configuration of two subspaces of a real Hilbert space $H$ up to orthogonal mappings. Given two finite-dimensional subspaces $S_1, S_2\subseteq H$ and denoting by~$q$ the minimum 
of the dimensions of $S_1$ and $S_2$, the principal angles
\begin{equation*}
  \theta _1, ..., \theta _q \in \left[0, \frac{\pi}{2}\right]
\end{equation*}
and the corresponding principal vectors
\[
    u_1, ..., u_q \in S_1 \qquad  v_1, ..., v_q \in S_2
\]
are defined inductively by
\begin{equation}\label{eq:PrincipalAngles}
  \begin{split}
    u_k, v_k &= \argmax \{|\langle u_k, v_k \rangle| : \langle u_k, u_i \rangle = \langle v_k, v_i \rangle = \delta_{k, i} \for i = 1, ... k\} \\
    \theta_k &= \arccos(|\langle u_k, v_k \rangle|)
  \end{split}
\end{equation}
for $k=1,\ldots, q$. 
The principal angles can also be represented in terms of the orthogonal projections $P_{S_1}$ and $P_{S_2}$ onto $S_1$ and $S_2$, respectively. 
More precisely, 
\begin{equation*}
  \begin{split}
    \theta_k = \arccos(\sqrt{\lambda_k}),
  \end{split}
\end{equation*}
where \(\lambda_1 \geq ... \geq \lambda_q\) are the first \(q\) eigenvalues of the restriction of $P_{S_2} P_{S_1}$ to $S_2$. 
This formula allows for a direct computation of the principal angles, thus avoiding the optimisation problems in~\eqref{eq:PrincipalAngles}. Moreover, it has the advantage that it also makes sense for 
infinite-dimensional subspaces; see, for example,~\cite{AnglesInfinite}. \\
The principal angles between two subspaces define them completely up to a simultaneous rotation. 
This means that every rotation-invariant function of two subspaces can be written as a function 
of the principal angles between them; 
for example, the Dixmier angle and the Friedrichs angle are the smallest and the smallest non-zero principal angle, 
respectively.
\\
Since the function which maps subspaces to their orthogonal complements commutes with rotations, knowing all the angles between two subspaces \(S_1\) and \(S_2\) is equivalent to knowing all the angles 
between \(S_1\) and \(S_2^\bot\). \\
We denote by \(\Theta(S_1, S_2)\) the ordered tuple of the principal angles between two subspaces 
\(S_1\) and \(S_2\). \\
A more detailed exposition of these angles, where the relations between various approaches to angles between 
subspaces---including principal angles and directed distances---are examined, can be found 
in~\cite[Chapter 5.15]{matrix2000meyer} and in the very nice survey article~\cite{BS}.

\subsection{The Cross-Ratio of projective points}
For four distinct points $a_1, a_2, a_3, a_4$ of the projective line $\mathbb{P}^1(\mathbb{R})$, the \textit{cross-ratio} of these points, denoted by \([a_1, a_2, a_3, a_4] \in \R \cup \{\infty\}\), is defined by
\begin{equation*}
  [a_1, a_2, a_3, a_4] = \frac{
    \det
    \begin{pmatrix}
      \lambda_3 & \lambda_1 \\ \mu_3 & \mu_1
    \end{pmatrix}
    \det
    \begin{pmatrix}
      \lambda_4 & \lambda_2 \\ \mu_4 & \mu_2
    \end{pmatrix}
  }{
    \det
    \begin{pmatrix}
      \lambda_3 & \lambda_2 \\ \mu_3 & \mu_2
    \end{pmatrix}
    \det
    \begin{pmatrix}
      \lambda_4 & \lambda_1 \\ \mu_4 & \mu_1
    \end{pmatrix}
  }
\end{equation*}
where \(\frac{y}{0} = \infty\) and \(a_i = p(\lambda_i, \mu_i)\), that is, \(\lambda_i, \mu_i\) are the 
homogeneous coordinates of \(a_i\). In our application, the denominator will never be zero, 
and so we can always assume that \([a_1, a_2, a_3, a_4] \in \R\). We will need the following formula 
for the cross-ratio:
\begin{lemma}\label{lemma:cross-ratio-formula}
  Let \(a \in (\R^2)^4\). Then
  \begin{equation*}
    \frac{\langle a_1, a_3 \rangle \langle a_2, a_4 \rangle}{\langle a_1, a_4 \rangle \langle a_2, a_3 \rangle} = [\spa(a_1), \spa(a_2), \spa(a_3)^\bot, \spa(a_4)^\bot]
  \end{equation*}
\end{lemma}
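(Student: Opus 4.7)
The plan is to prove this by direct computation, unwinding the definition of the cross-ratio with the correct choice of homogeneous coordinates. The key observation is that for $a_i = (\lambda_i, \mu_i) \in \R^2$, the line $\spa(a_i)$ has homogeneous coordinates $(\lambda_i, \mu_i)$, while the orthogonal line $\spa(a_i)^\bot$ is spanned by the rotated vector $(-\mu_i, \lambda_i)$ and therefore has homogeneous coordinates $(-\mu_i, \lambda_i)$. With this in hand, the whole lemma reduces to a symbolic identity between $2\times 2$ determinants and Euclidean inner products in $\R^2$.

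First I would fix the notation $a_i = (\lambda_i, \mu_i)$ and list the homogeneous coordinates of the four projective points appearing in the right-hand side cross-ratio, namely $(\lambda_1,\mu_1)$, $(\lambda_2,\mu_2)$, $(-\mu_3,\lambda_3)$ and $(-\mu_4,\lambda_4)$. Then I would substitute these into the defining formula of $[\cdot,\cdot,\cdot,\cdot]$ given earlier in the excerpt. Each of the four resulting $2\times 2$ determinants takes the shape
\[
  \det\begin{pmatrix} -\mu_i & \lambda_j \\ \lambda_i & \mu_j \end{pmatrix}
  = -\mu_i\mu_j - \lambda_i\lambda_j
  = -\langle a_i, a_j\rangle,
\]
for the appropriate indices $i \in \{3,4\}$ and $j \in \{1,2\}$.

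The final step is simply bookkeeping: the numerator contributes $(-\langle a_3,a_1\rangle)(-\langle a_4,a_2\rangle) = \langle a_1,a_3\rangle\langle a_2,a_4\rangle$, using symmetry of the inner product, and analogously the denominator contributes $\langle a_2,a_3\rangle\langle a_1,a_4\rangle$. The two minus signs cancel between numerator and denominator, producing the claimed expression.

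There is no real obstacle here beyond being careful about which coordinate goes where; the mild subtlety is remembering that the orthogonal complement of $\spa(\lambda,\mu)$ in $\R^2$ is spanned by $(-\mu,\lambda)$ (rather than $(\mu,-\lambda)$, which would only flip an overall sign and in any case cancels in the cross-ratio). One should also note that the denominator is nonzero precisely when $a_1, a_2$ are both not orthogonal to $a_3$ and $a_4$, so the quotient is well-defined as a real number exactly in the regime where the right-hand side cross-ratio is finite, which is consistent with the remark in the text that in the intended applications the denominator never vanishes.
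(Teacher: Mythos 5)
Your proof is correct and follows essentially the same route as the paper: the paper's proof likewise just plugs homogeneous coordinates for $\spa(a_i)^\bot$ (it uses $(\mu_i,-\lambda_i)$, projectively the same as your $(-\mu_i,\lambda_i)$) into the defining formula of the cross-ratio and expands the $2\times 2$ determinants into inner products. The only cosmetic difference is your sign convention for the orthogonal vector, which, as you note, cancels in the cross-ratio.
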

\begin{proof}
  Using \((x_2, -x_1)\) as homogeneous coordinates for \(\spa(x)^\bot\) and the Leibniz formula for the 
determinant, we can directly obtain this assertion from the definition.
\end{proof}
The behaviour of the cross-ratio function with respect to permutations is well known. Since we will use the 
following lemma later, we state it at this point without proof (which can be found, for example, in \cite[pp. 
123--126]{Geometry1}).
\begin{lemma}\label{lemma:cross-ratio-argument-switch}
  The cross-ratio satisfies
  \begin{equation*}
      [a, b, c, d] = [d, c, b, a] \qquad\text{and}\qquad [a, c, b, d] = 1 - [a, b, c, d],
  \end{equation*}
  where $a, b, c, d$ are pairwise distinct one-dimensional subspaces of \(\R^2\).
\end{lemma}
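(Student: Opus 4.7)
The plan is to reduce both identities to elementary manipulations of $2\times 2$ determinants. I first set up notation: write $a_i = p(\lambda_i,\mu_i)$ and $d_{ij} = \lambda_i\mu_j - \lambda_j\mu_i$, so that the defining formula becomes
\[
  [a_1,a_2,a_3,a_4] \;=\; \frac{d_{31}\,d_{42}}{d_{32}\,d_{41}}.
\]
Since the $a_i$ are pairwise distinct one-dimensional subspaces, every $d_{ij}$ with $i\neq j$ is nonzero, so no denominator vanishes. The two facts I shall repeatedly exploit are the antisymmetry $d_{ij}=-d_{ji}$ and the Plücker three-term relation
\[
  d_{13}\,d_{24} \;=\; d_{12}\,d_{34} + d_{14}\,d_{23},
\]
which holds for any four vectors in $\R^2$.

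For the reversal identity $[a,b,c,d]=[d,c,b,a]$ I would simply substitute: $[a_4,a_3,a_2,a_1] = d_{24}d_{13}/(d_{23}d_{14})$, and four applications of antisymmetry produce two sign flips in the numerator and two in the denominator, all cancelling, so the expression coincides with $[a_1,a_2,a_3,a_4]$. This step is pure sign bookkeeping.

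The substantive identity is $[a,c,b,d]=1-[a,b,c,d]$. Expanding the left-hand side yields $[a_1,a_3,a_2,a_4] = d_{21}d_{43}/(d_{23}d_{41})$; rewriting this over the common denominator $d_{32}d_{41}$ via $d_{23}=-d_{32}$, adding $[a_1,a_2,a_3,a_4]$, and clearing signs through antisymmetry, the equation $[a_1,a_3,a_2,a_4]+[a_1,a_2,a_3,a_4]=1$ becomes exactly the Plücker relation displayed above. I would prove that relation either by brute coordinate expansion (all eight monomials cancel in pairs) or, more cleanly, by choosing scalars $\alpha,\beta$ with $a_4=\alpha a_1+\beta a_2$ — possible because the distinct projective points $a_1,a_2$ are linearly independent in $\R^2$ — and then using bilinearity and alternation of the determinant to reduce both sides of the relation to $\alpha\,d_{31}d_{12}$. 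The main obstacle is not any individual step but the sign-tracking: one must keep the Plücker relation firmly in mind as the target and select the correct common denominator so that $1-[a,b,c,d]$ has exactly the numerator the relation supplies.
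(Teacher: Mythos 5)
Your argument is correct, and it is worth noting that the paper itself gives no proof of this lemma at all: it is stated without proof with a pointer to Berger's \emph{Geometry I}, so your elementary determinant computation is a genuinely self-contained alternative rather than a variant of the paper's argument. Your reduction checks out in detail: with $d_{ij}=\lambda_i\mu_j-\lambda_j\mu_i$ the definition gives $[a_1,a_2,a_3,a_4]=d_{31}d_{42}/(d_{32}d_{41})$, the reversal identity is indeed pure sign bookkeeping (four factors, four sign flips), and for the second identity one gets
\begin{equation*}
  [a_1,a_3,a_2,a_4]+[a_1,a_2,a_3,a_4]
  =\frac{d_{12}d_{43}+d_{31}d_{42}}{d_{32}d_{41}},
\end{equation*}
so the claim that this equals $1$ is, after the sign flips $d_{12}d_{43}=-d_{12}d_{34}$, $d_{31}d_{42}=d_{13}d_{24}$ and $d_{32}d_{41}=d_{14}d_{23}$, precisely the three-term Pl\"ucker relation $d_{13}d_{24}=d_{12}d_{34}+d_{14}d_{23}$; and your suggested proof of that relation by writing a representative of $a_4$ as $\alpha a_1+\beta a_2$ does reduce both sides to $\alpha\,d_{12}d_{31}$, as claimed. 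The pairwise distinctness of the projective points guarantees both that all $d_{ij}$ with $i\neq j$ are nonzero (so no denominator vanishes) and that representatives of $a_1,a_2$ are linearly independent, which is the only place that hypothesis is used. What your route buys is independence from the cited reference and an argument that visibly works for any choice of homogeneous coordinates; what the paper's citation buys is brevity and access to the full symmetric-group action on cross-ratios, of which the two stated identities are just the generators. If you write this up, carry out the sign-tracking explicitly rather than describing it, since that is the only place an error could hide.
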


\subsection{The Oppenheim angle between linear projections}

Let $P_1,P_2$ be two bounded linear projections in a Banach space. Assume that there is a bounded linear 
projection $P_{12}$ onto the intersection of the images of $P_1$ and $P_2$ satisfying 
\begin{equation}\label{eq:ConsistProj}
  P_{12}P_1=P_{12}\qquad\text{and}\qquad P_{12}P_2=P_{12}.
\end{equation}
Note that $P_{12}$ is required to be surjective. Using these data, we define
\[
  \cos_{P_{12}}(\angle (P_1,P_2)) = \max\{\|P_1(P_2-P_{12})\|,\|P_2(P_1-P_{12})\|\}
\]
and
\[
  \cos(\angle (P_1,P_2)) = \inf\{\cos_{P_{12}}(P_1,P_2)\colon P_{12}\;\text{projection onto}\;\im P_1\cap\im P_2\;\text{satisfying~\eqref{eq:ConsistProj}}\}.
\]
In the case of two orthogonal projections $P_1, P_2$ in a Hilbert space, the above angle coincides 
with the Friedrichs angle between the images of $P_1$ and $P_2$. The subtraction of the projection 
$P_{12}$ in the definition above plays the role of the quotient in the definition of the Friedrichs angle. 
There need not always be such a projection $P_{12}$. Moreover, the intersection of the images 
of $P_1$ and $P_2$ need not even be complemented. Two projections $P_1$ and $P_2$ are called
\emph{consistent} if such a projection does exist. We also call a projection $P_{12}$ with the above 
properties a \emph{consistency projection}. Our main interest in this angle lies in the fact that 
a large Oppenheim angle, that is, a small cosine in the above definition, implies that the iterations 
$(P_1P_2)^{n}$ converge uniformly to a (consistency) projection onto $\im P_1\cap \im P_2$. 
For a detailed discussion of these angles, we refer the reader to~\cite{Oppenheim2017}.

\section{Classification of the Convergence Behaviour Through Principal Angles}

In all what follows, let $X$ be a Euclidean space. All geometric characterisations of the convergence 
behaviour we provide in this article are based on the two-dimensional case. Given two subspaces $M,N\subset\mathbb{R}^{n}$ we use the notation $M\oplus N$ for the direct sum of $M$ and $N$, that is, this notation indicates that $M\cap N=\{0\}$.

\subsection{The Two-Dimensional Case}
For two two-dimensional projections \(P_1, P_2: \R^2\rightarrow \R^2\), the convergence behaviour of \(\projseq\) is determined by the geometric relation between the nullspaces and ranges of \(P_1, P_2\).
The question of the convergence behaviour of the sequence of iterates $((P_1P_2)^{n})_{n=1}^{\infty}$ is trivial if either of the projections is the identity or the zero mapping. Therefore we restrict ourselves to the case where the range of both projections is a one-dimensional subspace of $\mathbb{R}^{2}$.

\begin{proposition}\label{prop:2d}
  Let  \(P_1, P_2: \mathbb{R}^{2} \rightarrow \mathbb{R}^{2}\) be two non-trivial projections, 
that is, neither of them is the identity or zero. We set $R_1 = \im P_1$, $N_1 = \ker P_1$, $R_2= \im 
P_2$ and 
$N_2 = \ker P_2$. The composition $P_1P_2$ has at most one non-zero eigenvalue \(\lambda\) and it satisfies
  \begin{equation*}
    \lambda = [R_1, R_2, N_2, N_1].
  \end{equation*}
  In particular, for the spectral radius $\rho(P_1 P_2)$ we have
  \begin{equation*}
    \rho(P_1 P_2) = \frac{s(R_1, N_2) s(R_2, N_1)}{s(R_1, N_1) s(R_2, N_2)},
  \end{equation*}
  where $s(M,N):=\sqrt{1-c(M,N)^2}$ for subspaces $M,N\subset\mathbb{R}^{2}$ and $c(M,N)$ is the Friedrichs angle of $M$ and $N$. The iterates $(P_1P_2)^{n}$ converge to zero if and only if the geometric condition
  \[
    \frac{s(R_1, N_2) s(R_2, N_1)}{s(R_1, N_1) s(R_2, N_2)} < 1
  \]
  is satisfied.
\end{proposition}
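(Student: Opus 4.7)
The plan is to exploit the fact that $P_1 P_2$ has image contained in $R_1 = \im P_1$, which is one-dimensional, so $P_1 P_2$ has rank at most one. Consequently, its only possible non-zero eigenvalue equals its trace. The strategy is then to compute this trace explicitly in a convenient basis, recognise the resulting expression as a cross-ratio via the explicit determinantal definition, and finally convert that cross-ratio into a ratio of sines using Lemma~\ref{lemma:cross-ratio-formula}.

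Concretely, I would fix unit vectors $a_1 \in R_1$, $a_2 \in R_2$, $a_3 \in N_2$, $a_4 \in N_1$. Because $R_2 \oplus N_2 = \R^2$, the pair $\{a_2, a_3\}$ is a basis in which $P_2$ has matrix $\bigl(\begin{smallmatrix}1 & 0 \\ 0 & 0\end{smallmatrix}\bigr)$. Writing $a_1 = c_1 a_2 + c_2 a_3$ and $a_4 = d_1 a_2 + d_2 a_3$, I compute $P_1$ by conjugating $\bigl(\begin{smallmatrix}1 & 0 \\ 0 & 0\end{smallmatrix}\bigr)$ with the change-of-basis matrix between $\{a_1,a_4\}$ and $\{a_2,a_3\}$; multiplying by $P_2$ and taking the trace gives $\lambda = c_1 d_2 / (c_1 d_2 - c_2 d_1)$. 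Reading off the homogeneous coordinates $a_1 \leftrightarrow (c_1, c_2)$, $a_2 \leftrightarrow (1,0)$, $a_3 \leftrightarrow (0,1)$, $a_4 \leftrightarrow (d_1, d_2)$ and plugging them into the defining formula for $[R_1, R_2, N_2, N_1]$ produces exactly the same rational expression, which proves the first identity.

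To obtain the spectral-radius formula, I would apply Lemma~\ref{lemma:cross-ratio-formula} after replacing $a_3, a_4$ by unit vectors $b_3, b_4$ perpendicular to them: since $\spa(b_j)^\bot = \spa(a_j)$, the lemma gives
\[
  [R_1, R_2, N_2, N_1] \;=\; \frac{\langle a_1, b_3\rangle \langle a_2, b_4\rangle}{\langle a_1, b_4\rangle \langle a_2, b_3\rangle}.
\]
For unit vectors $u, v$, the quantity $|\langle u, v^\bot\rangle|$ is precisely the sine of the angle between $\spa(u)$ and $\spa(v)$, so taking absolute values yields $\rho(P_1 P_2) = |\lambda| = s(R_1, N_2) s(R_2, N_1) / \bigl(s(R_1, N_1) s(R_2, N_2)\bigr)$. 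Because $P_1 P_2$ has rank at most one, it is either the zero map, a nilpotent that satisfies $(P_1P_2)^2 = 0$, or it acts as the scalar $\lambda$ on its one-dimensional image with a complementary one-dimensional kernel; in every case $(P_1P_2)^n \to 0$ is equivalent to $|\lambda| < 1$, which is the stated geometric condition. The main thing I expect to require care is keeping the ordering of the four subspaces straight when matching the trace with the cross-ratio (a permutation changes it according to Lemma~\ref{lemma:cross-ratio-argument-switch}), together with verifying that the degenerate configurations $R_1 = N_2$ or $N_1 = R_2$ collapse both sides of the identity to zero.
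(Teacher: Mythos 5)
Your proposal is correct, and it reaches the result by a genuinely different computation than the paper. The paper writes each projection in dyadic form $P_k = w_k v_k^\ast$ with $\spa(w_k)=R_k$, $\spa(v_k)=N_k^\bot$ and $\langle v_k,w_k\rangle=1$, observes that an eigenvector for a non-zero eigenvalue must be a multiple of $w_1$, and computes $P_1P_2w_1$ directly; this yields $\lambda = \frac{\langle v_1,w_2\rangle\langle v_2,w_1\rangle}{\langle v_1,w_1\rangle\langle v_2,w_2\rangle}$, from which a single application of Lemma~\ref{lemma:cross-ratio-formula} gives the cross-ratio identity, and normalising the same inner products immediately gives the sine formula. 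You instead exploit the rank-one structure through the trace: the adapted basis for $R_2\oplus N_2$ gives $\lambda = c_1d_2/(c_1d_2-c_2d_1)$, you verify the cross-ratio identity directly from the determinantal definition (legitimate, since the cross-ratio is unchanged under the linear change of coordinates because the factor $\det$ of the transition matrix cancels between numerator and denominator), and you use Lemma~\ref{lemma:cross-ratio-formula} only afterwards, in the reverse direction, to convert the cross-ratio into inner products with the rotated vectors and hence into sines. Both routes are sound and handle the degenerate coincidences in the same implicit way as the paper does; the paper's dyadic computation is slightly more economical, since one formula serves both the cross-ratio and the spectral-radius statements, while your trace argument avoids introducing the functionals $v_k$ and the normalisation $\langle v_k,w_k\rangle = 1$, and your explicit case analysis based on $(P_1P_2)^n = \lambda^{n-1}P_1P_2$ spells out the final convergence equivalence, which the paper disposes of in a single closing sentence.
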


\begin{proof}
  Let \(w_1, v_1, w_2, v_2 \in \mathbb{R}^2\) be such that \(P_1 = w_1 v_1^\ast\), \(P_2 = w_2 v_2^\ast\).
  For \(k= 1, 2\), set \(R_k = \im P_k\), \(N_k = \ker P_k\) and observe that \(\spa(v_k) = N_k^\bot\), \(\spa(w_k) = R_k \neq \{0\}\) and, since \(P_k\) is a projection,
  \begin{equation*}
    w_k = P_k(w_k) = w_k \langle v_k, w_k \rangle,
  \end{equation*}
  and thus \(1 = \langle v_k, w_k \rangle\).
  
  Since \(R_1 = \spa(w_1)\), any non-zero eigenvector of \(P_1 P_2\) must be a multiple of \(w_1\). Calculating $P_1P_2w_1$, we get
  \begin{equation*}
    \begin{split}
      P_1 P_2 w_1 &= w_1 v_1^\ast(w_2) v_2^\ast(w_1) = \frac{\langle v_1, w_2 \rangle \langle v_2, w_1 \rangle}{\langle v_1, w_1 \rangle \langle v_2, w_2 \rangle} w_1
    \end{split}
  \end{equation*}
  and hence, using Lemma~\ref{lemma:cross-ratio-formula}, we obtain
  \begin{equation*}
    \begin{split}
      \lambda &= \frac{\langle v_1, w_2 \rangle \langle v_2, w_1 \rangle}{\langle v_1, w_1 \rangle \langle v_2, w_2 \rangle} =
      \frac{\langle w_1, v_2 \rangle \langle w_2, v_1 \rangle}{\langle w_1, v_1 \rangle \langle w_2, v_2 \rangle} \\
      &= [\spa(w_1), \spa(w_2), \spa(v_2)^\bot, \spa(v_1)^\bot] = [R_1, R_2, N_2, N_1]
    \end{split}
  \end{equation*}
  and
  \begin{equation*}
    \rho(P_1 P_2) = \frac{| \langle w_1, v_2 \rangle | | \langle w_2, v_1 \rangle | }{| \langle w_1,v_1 \rangle | | \langle w_2, v_2 \rangle |} = 
    \frac{| \langle \normed{w_1}, \normed{v_2} \rangle | | \langle \normed{w_2}, \normed{v_1} \rangle | }{| \langle \normed{w_1}, \normed{v_1} \rangle | | \langle \normed{w_2}, \normed{v_2} \rangle |} 
    = \frac{s(R_1, N_2) s(R_2, N_1)}{s(R_1, N_1) s(R_2, N_2)}\text{.}
  \end{equation*}
  In particular, we see that convergence occurs if and only if the modulus of the above number is smaller than one.
\end{proof}

\begin{remark}
  Note that for two projections $P_1$ and $P_2$ in $\mathbb{R}^{2}$ with distinct one-dimensional images, 
the zero mapping is the uniquely determined projection onto the intersection of these ranges. 
Using the above result, we see that
  \[
    \frac{s(R_1, N_2) s(R_2, N_1)}{s(R_1, N_1) s(R_2, N_2)} = \rho(P_1P_2) \leq \|P_1P_2\| 
= \cos \angle_{0}(P_1,P_2) = \cos \angle(P_1,P_2),
  \]
  that is, the iterates converge whenever the ``cosine'' of the Oppenheim angle between the 
projections is smaller than one. Moreover, the above discussion shows that, in this particular case, 
there is a relation between the Oppenheim angle and the principal angles.
\end{remark}

\subsection{The three-dimensional case: Some additional information might be needed}

Also in the three-dimensional case, we restrict ourselves to non-trivial projections, 
that is, we exclude both the identity mapping and the zero mapping. In this section let 
$P_1,P_2$ 
be two non-trivial projections in $\mathbb{R}^{3}$. In order to simplify the notation, we 
use the abbreviations
\[
  R_1 = \im P_1, \qquad N_1 = \ker P_1, \qquad R_2 = \im P_2 \qquad\text{and}\qquad N_2 = \ker P_2.
\]
We start our investigation of the convergence behaviour with the observation that the problem is at its core still a two-dimensional problem. We call an eigenvector of the composition  $P_1P_2$ non-trivial if the corresponding eigenvalue is neither zero nor one.

\begin{lemma}\label{lemma:eigenvector_geometric}
  For every non-trivial eigenvector \(v\) of \(P_1 P_2\), there exists a two-dimensional subspace \(E_v\) such that the intersections of \(E_v\) with \(R_1, R_2, N_1, N_2\) are all one-dimensional with trivial pairwise intersection and \(v \in E_v\).
\end{lemma}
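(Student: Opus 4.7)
The plan is to construct $E_v$ explicitly as $\spa(v, P_2 v)$ and then locate a distinguished line inside it in each of $R_1, R_2, N_1, N_2$. Let $\lambda \notin \{0,1\}$ denote the eigenvalue associated with $v$, so $P_1 P_2 v = \lambda v$. Since $\lambda \neq 0$, the identity $v = \lambda^{-1} P_1(P_2 v)$ shows $v \in R_1$, and hence $P_1 v = v$; setting $w := P_2 v$, we have $w \in R_2$ and $P_1 w = \lambda v$. I would take $E_v := \spa(v, w)$, which trivially contains $v$.

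Next I would check that $v$ and $w$ are linearly independent, so that $\dim E_v = 2$. Assuming $w = \mu v$, applying $P_1$ gives $\lambda v = \mu v$, so $\mu = \lambda$; on the other hand, $w = P_2 w = \mu P_2 v = \mu w$ combined with $w = \lambda v \neq 0$ forces $\mu = 1$, contradicting $\lambda \neq 1$.

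The four intersections then become visible with natural witnesses: $v \in R_1$, $w \in R_2$, $w - \lambda v \in N_1$, and $w - v \in N_2$. The memberships in $N_1$ and $N_2$ follow directly from $P_1 w = \lambda v$, $P_1 v = v$ and from $P_2 w = w$, $P_2 v = w$ respectively, and all four vectors are non-zero by independence of $v, w$ together with $\lambda \neq 0$. To promote each intersection from ``at least one-dimensional'' to ``exactly one-dimensional'', I would rule out each of the four containments $E_v \subseteq R_1, R_2, N_1, N_2$ by a short computation; for instance, $E_v \subseteq R_1$ yields $w = P_1 w = \lambda v$, contradicting independence, and the other three cases are analogous. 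Pairwise triviality of the resulting four lines then reduces to six elementary comparisons in the basis $\{v, w\}$, of which only the distinction between $\spa(w - \lambda v)$ and $\spa(w - v)$ genuinely requires $\lambda \neq 1$.

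I do not anticipate any real obstacle here: once the right two-dimensional subspace is guessed, the argument is just linear algebra in the basis $\{v, w\}$. The only modestly creative step is to recognise $w - \lambda v$ and $w - v$ as the correct explicit witnesses for $N_1$ and $N_2$, but these choices are essentially dictated by the already-computed actions of $P_1$ and $P_2$ on $v$ and $w$.
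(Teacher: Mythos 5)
Your proposal is correct and follows essentially the same route as the paper: both take \(E_v=\spa(v,P_2v)\) and exhibit the same witness vectors \(v\) (resp.\ \(\lambda v\)), \(w\), \(w-\lambda v\), \(w-v\) for the four intersections, with linear independence of \(v\) and \(w\) derived from \(\lambda\notin\{0,1\}\). You are in fact somewhat more explicit than the paper in ruling out the containments \(E_v\subseteq R_i, N_i\) and in checking pairwise distinctness of the four lines, details the paper leaves to the reader.
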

\begin{proof}
  Let \(\lambda\) be the eigenvalue corresponding to \(v\) (that is \(P_1 P_2 v = \lambda v\)). Set \(w = P_2(v)\), \(E_v = \spa(w, v)\) (\(\lambda \neq 0\) implies \(w \neq 0\)). Then
  \begin{equation*}
    \begin{split}
      \lambda v \in R_1 \cap E_v, \qquad   \lambda v - w = P_1(w) - w \in N_1 \cap E_v,\\
       w \in R_2 \cap E_v\quad\text{and}\quad  w - v = P_2(v) - v \in N_2 \cap E_v.
    \end{split}
  \end{equation*}
  Note that \(w \in R_1\) or \(v \in R_2\) would mean that \(w = v = \lambda v\), 
while \(w \in N_1\) or \(v \in N_2\) would mean that \(P_1 P_2 v = 0\), both contrary to the assumption 
that \(v\) is non-trivial. Therefore \(v\) and \(w\) are linearly independent, 
all the intersections are one-dimensional and \(E_v\) is two-dimensional. 
Also using this, we can easily reproduce the triviality of the pairwise intersections. 
\end{proof}

The above lemma implies, in particular, that we still can interpret the ranges and kernels of the 
projections under consideration as projective points. Moreover, the cross-ratio of these points 
still carries the vital information regarding the convergence.

\begin{lemma}\label{lemma:nontrivial_ev_formula}
  Let \(v\) be an eigenvector corresponding to the non-trivial eigenvalue \(\lambda\) of the operator \(P_1 P_2\) and \(E_v\) be the associated subspace established in Lemma~\ref{lemma:eigenvector_geometric}. Set
  \begin{equation*}
    R_1' = R_1 \cap E_v, \qquad R_2' = R_2 \cap E_v, \qquad N_1' = N_1 \cap E_v \qquad\text{and}\qquad N_2' = N_2 \cap E_v
  \end{equation*}
  Then 
  \begin{equation*}
    \lambda = [R_1', R_2', N_2', N_1'],
  \end{equation*}
  where the cross-ratio is meant to be taken on \(E_v\).
\end{lemma}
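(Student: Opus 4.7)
The plan is to reduce the statement to the two-dimensional case and then invoke Proposition~\ref{prop:2d} on the plane $E_v$. The subtlety is that $E_v$ need not be invariant under $P_1$ or $P_2$, so we cannot simply restrict these projections; instead we define auxiliary two-dimensional projections on $E_v$ that share the relevant images and kernels with $P_1$ and $P_2$, and then show they admit $v$ as an eigenvector with the same eigenvalue $\lambda$.

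Concretely, I would define $\tilde{P}_1, \tilde{P}_2 \colon E_v \to E_v$ by letting $\tilde{P}_k$ be the projection onto $R_k'$ along $N_k'$. This is well defined because Lemma~\ref{lemma:eigenvector_geometric} guarantees that $R_k'$ and $N_k'$ are one-dimensional subspaces of the two-dimensional space $E_v$ with trivial intersection, so $R_k' \oplus N_k' = E_v$. Moreover, both $\tilde{P}_1$ and $\tilde{P}_2$ are non-trivial, and their images and kernels are $R_1', R_2'$ and $N_1', N_2'$, respectively, so Proposition~\ref{prop:2d} will compute the unique non-zero eigenvalue of $\tilde{P}_1 \tilde{P}_2$ as precisely $[R_1', R_2', N_2', N_1']$.

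The main step is to verify that $\tilde{P}_1\tilde{P}_2 v = \lambda v$. Setting $w = P_2 v$ as in the proof of Lemma~\ref{lemma:eigenvector_geometric}, we know $v, w \in E_v$ with $w \in R_2'$ (since $P_2 w = w$) and $v - w \in N_2'$ (since $P_2(v-w) = 0$), giving the decomposition $v = w + (v-w)$ along $R_2' \oplus N_2'$, hence $\tilde{P}_2 v = w$. Similarly, $\lambda v = P_1 P_2 v = P_1 w \in R_1'$ and $w - \lambda v = w - P_1 w \in N_1$ (and in $E_v$, hence in $N_1'$), so the decomposition $w = \lambda v + (w - \lambda v)$ along $R_1' \oplus N_1'$ yields $\tilde{P}_1 w = \lambda v$. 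Combining, $\tilde{P}_1\tilde{P}_2 v = \lambda v$, and since $\lambda \neq 0$ by non-triviality, $\lambda$ is the unique non-zero eigenvalue of $\tilde{P}_1 \tilde{P}_2$.

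The hard part is bookkeeping: making sure every ingredient supplied by Lemma~\ref{lemma:eigenvector_geometric}—namely that $\lambda v \in R_1'$, $\lambda v - w \in N_1'$, $w \in R_2'$, $w - v \in N_2'$, and that all pairwise intersections inside $E_v$ are trivial—is actually used to justify both the existence of $\tilde{P}_1, \tilde{P}_2$ and the two direct-sum decompositions above. Once these are in place, applying Proposition~\ref{prop:2d} to $\tilde{P}_1\tilde{P}_2$ immediately yields $\lambda = [R_1', R_2', N_2', N_1']$ in $E_v$, which is the desired identity.
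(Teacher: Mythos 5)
Your proposal is correct and in essence identical to the paper's proof: both reduce to the plane $E_v$ and apply Proposition~\ref{prop:2d} to two-dimensional projections with images $R_k'$ and kernels $N_k'$. The only quibble is that the ``subtlety'' you flag is illusory: since $E_v = R_k' \oplus N_k'$ with $R_k'\subseteq R_k$ and $N_k'\subseteq N_k$, the plane $E_v$ \emph{is} invariant under both $P_1$ and $P_2$, and your auxiliary projections $\tilde P_k$ coincide exactly with the restrictions $P_k|_{E_v}$, which is precisely how the paper argues (your explicit verification that $\tilde P_1\tilde P_2 v=\lambda v$ is then automatic).
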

\begin{proof} 
  First, we show that \(P_k(E_v) \subseteq E_v\) for \(k \in \{1,2\}\). To this aim, first observe that Lemma~\ref{lemma:eigenvector_geometric} implies that \({E_v = R_k' \oplus N_k'}\). 
  Given \(x \in E_v\), we choose \(r \in R_k', n \in N_k'\) such that \(x = r + n\). Then
  \begin{equation*}
    P_k(x) = P_k(r) + P_k(n) = r \in E_v.
  \end{equation*} 
  Hence the mappings \(P_k': E_v \rightarrow E_v, x \mapsto P_k(x)\) are well-defined projections for \(k \in \{1, 2\}\). 
  Since, by construction, \(\lambda\) is the non-trivial eigenvalue of \(P_1' P_2'\), 
  we can use Proposition~\ref{prop:2d} to complete the proof:
  \begin{equation*}
    \lambda  = [\im P_1', \im P_2', \ker P_2', \ker P_1'] = [R_1', R_2', N_2', N_1'].
  \end{equation*}
\end{proof}
A plane with the properties of \(E_v\) in Lemma~\ref{lemma:eigenvector_geometric} is conversely always associated 
to a non-trivial eigenvector:
\begin{lemma}
  Let \(P_1, P_2\) be projections and let \(E\) be a two-dimensional subspace of \(\mathbb{R}^3\) 
such that the intersections of \(E\) with \(R_1, R_2, N_1, N_2\) are all one-dimensional with trivial pairwise intersection. 
Then \(E \cap R_1\) is an eigenspace of \(P_1 P_2\) corresponding to a non-trivial eigenvalue. 
\end{lemma}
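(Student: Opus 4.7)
The plan is to reduce the three-dimensional situation to the two-dimensional one of Proposition~\ref{prop:2d} by restricting $P_1$ and $P_2$ to the invariant plane $E$. The key preliminary observation is that for $k\in\{1,2\}$, the hypothesis that $E\cap R_k$ and $E\cap N_k$ are one-dimensional with trivial intersection forces $E = (E\cap R_k) \oplus (E\cap N_k)$. Writing any $x\in E$ as $x = r + n$ with $r\in E\cap R_k$ and $n\in E\cap N_k$ gives $P_k x = r \in E$, so $P_k$ leaves $E$ invariant. Thus the restrictions $P_k' := P_k|_E\colon E\to E$ are well-defined projections with image $E\cap R_k$ and kernel $E\cap N_k$, and since these four one-dimensional subspaces are pairwise distinct, $P_1'$ and $P_2'$ are non-trivial.

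Next, I would apply Proposition~\ref{prop:2d} to $P_1',P_2'$. It yields a unique candidate for a non-zero eigenvalue,
\[
  \lambda = [E\cap R_1,\; E\cap R_2,\; E\cap N_2,\; E\cap N_1],
\]
whose eigenvector lies in $\im P_1' = E\cap R_1$. Since $P_k$ agrees with $P_k'$ on $E$, we have $P_1P_2 v = P_1'P_2' v$ for every $v\in E$; in particular, $E\cap R_1$ is invariant under $P_1P_2$, and every non-zero vector in it is an eigenvector of $P_1P_2$ with eigenvalue $\lambda$.

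The one remaining item is to verify that $\lambda\notin\{0,1\}$, so that the eigenvalue is non-trivial in the sense of the paper. This is the only slightly delicate step, but it reduces to a standard property of the cross-ratio of four pairwise distinct projective points, namely that such a cross-ratio always lies in $\R\setminus\{0,1,\infty\}$. Concretely, $\lambda = 0$ would force one of the two determinants in the numerator of the cross-ratio formula to vanish, which would mean $E\cap R_1 = E\cap N_2$ or $E\cap R_2 = E\cap N_1$; and by Lemma~\ref{lemma:cross-ratio-argument-switch} (applied with the swap of the middle two arguments) the equality $\lambda = 1$ is equivalent to
\[
  [E\cap R_1,\; E\cap N_2,\; E\cap R_2,\; E\cap N_1] = 0,
\]
which by the same vanishing-determinant analysis forces $E\cap R_1 = E\cap R_2$ or $E\cap N_1 = E\cap N_2$. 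All four coincidences are directly ruled out by the assumption of trivial pairwise intersection, and the proof is complete. I expect this cross-ratio bookkeeping at the end to be the main (and only) genuine obstacle.
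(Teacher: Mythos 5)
Your proof is correct and follows essentially the same route as the paper: decompose $E=(E\cap R_k)\oplus(E\cap N_k)$ to see that the restrictions $P_k'$ are well-defined non-trivial projections of $E$, apply Proposition~\ref{prop:2d} to identify the eigenvalue of $P_1P_2$ on $E\cap R_1$ as the cross-ratio $[E\cap R_1, E\cap R_2, E\cap N_2, E\cap N_1]$, and conclude non-triviality from the pairwise distinctness of the four lines. The only (harmless) difference is that you verify $\lambda\notin\{0,1\}$ by hand via the determinant formula and Lemma~\ref{lemma:cross-ratio-argument-switch}, whereas the paper simply cites the standard fact about cross-ratios of four distinct projective points.
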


\begin{proof}
  As in the proof of Lemma~\ref{lemma:nontrivial_ev_formula} above, the well-definedness 
of the two projections \({P_k': E \rightarrow E, x \mapsto P_k(x)}\) follows from 
\(E = (R_k \cap E) \oplus (N_k \cap E)\) for \(k \in \{1, 2\}\).
  Since \(R_1' = R(P_1') \) is one-dimensional, it is an eigenspace of \(P_1' P_2'\) corresponding to the eigenvalue 
  \begin{equation*}
    \lambda = [\im P_1', \im P_2', \ker P_2', \ker P_1'] = [R_1 \cap E, R_2 \cap E, N_2 \cap E, N_1 \cap E].
  \end{equation*}
  By construction, \(R_1'\) is also an eigenspace of \(P_1 P_2\) corresponding to \(\lambda\), and since the arguments in the 
cross-ratio function are pairwise distinct, \(\lambda \notin \{0, 1\}\) (see, for example, \cite[Proposition 
6.1.3]{Geometry1}). \end{proof}

In order to formulate a characterisation of convergence in three dimensions, we need a geometric lemma regarding the connection of angles and directed distances for subspaces. Recall that for subspaces $M,N\subseteq X$ the directed distance $\delta(M,N)$ is defined by
\[
  \delta(M,N) = \sup \{d(x,N)\colon x\in M, \|x\|=1\}.
\]
A simple computation shows that $\delta(M,N)=\sup\{\|P_{N^\perp}x\|\colon x\in M, \|x\|=1\}$.

\begin{lemma}\label{lemma:gap_ratio_constant}
  Let \(H\) be a real Hilbert space, let \(S_1, S_2, V\) be three one-dimensional, pairwise distinct subspaces of \(H\) such that \(V \subseteq S_1 \oplus S_2\), and let \(W\) be another subspace of \(H\) such that \((S_1 \oplus S_2) \cap W = \{0\}\). Then
  \begin{equation*}
    \frac{s(S_1, V)}{s(S_2, V)} = \frac{\delta(S_1, V \oplus W)}{\delta(S_2, V \oplus W)}.
  \end{equation*}
\end{lemma}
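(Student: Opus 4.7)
The plan is to reduce both ratios to a common expression by exploiting the assumption $V\subseteq S_1\oplus S_2$, which forces everything to happen inside the two-dimensional plane $\Pi := S_1\oplus S_2$.

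First, I fix unit vectors $u_1,u_2,v$ spanning $S_1,S_2,V$, and I let $n$ be a unit vector spanning the one-dimensional orthogonal complement of $V$ inside $\Pi$ (well defined since $V\subseteq \Pi$ and $\dim\Pi=2$). The pair $\{v,n\}$ is an orthonormal basis of $\Pi$, so for $i\in\{1,2\}$ I can write
\[
  u_i = \langle u_i,v\rangle\, v + \langle u_i,n\rangle\, n.
\]
Since $S_i\neq V$, the coefficient $\langle u_i,n\rangle$ is nonzero. For one-dimensional subspaces the Friedrichs cosine is just the absolute inner product of unit vectors, so $s(S_i,V)=\sqrt{1-\langle u_i,v\rangle^{2}}=|\langle u_i,n\rangle|$. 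This immediately gives
\[
  \frac{s(S_1,V)}{s(S_2,V)}=\frac{|\langle u_1,n\rangle|}{|\langle u_2,n\rangle|}.
\]

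Next I handle the directed distances using $\delta(S_i,V\oplus W)=\|P_{(V\oplus W)^\perp}u_i\|$. Substituting the decomposition of $u_i$ and using that $v\in V\oplus W$ kills the $v$-component, I get
\[
  P_{(V\oplus W)^\perp}u_i = \langle u_i,n\rangle\,P_{(V\oplus W)^\perp}n,
\]
so $\delta(S_i,V\oplus W) = |\langle u_i,n\rangle|\cdot \|P_{(V\oplus W)^\perp}n\|$. The crucial point is that $P_{(V\oplus W)^\perp}n$ does not depend on $i$, so when I form the ratio the common factor $\|P_{(V\oplus W)^\perp}n\|$ cancels, yielding the same expression $|\langle u_1,n\rangle|/|\langle u_2,n\rangle|$ as above.

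The only thing that could spoil this argument is if $\|P_{(V\oplus W)^\perp}n\|=0$, i.e.\ if $n\in V\oplus W$. Writing $n=\alpha v + w$ with $w\in W$ and using $n\in \Pi$ gives $w = n-\alpha v\in \Pi\cap W=\{0\}$, hence $n=\alpha v$, which contradicts $n\perp v$ and $\|n\|=1$. So the cancellation is legitimate, and combining the two computed ratios completes the proof. The whole argument is essentially a coordinate choice; the only real content is recognising that $V\subseteq S_1\oplus S_2$ forces $P_{V^\perp}u_i$ to live in the one-dimensional line $\mathbb{R}n$ so that both projections act on $u_i$ only through the scalar $\langle u_i,n\rangle$.
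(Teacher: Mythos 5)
Your proof is correct and follows essentially the same route as the paper: both arguments exploit that $V \subseteq S_1 \oplus S_2$ reduces everything to a single scalar per line, so that the factor coming from the projection onto $(V \oplus W)^\perp$ cancels in the ratio, and both derive the nonvanishing of the denominator from $(S_1\oplus S_2)\cap W=\{0\}$. Your orthonormal frame $\{v,n\}$ of the plane with $\delta(S_i,V\oplus W)=|\langle u_i,n\rangle|\,\|P_{(V\oplus W)^\perp}n\|$ is just a coordinate version of the paper's decomposition $s_1=c\,s_2+v$ combined with the identity $P_{(V\oplus W)^\perp}=P_{(V\oplus W)^\perp}P_{V^\perp}$.
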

\begin{proof}
  Let \(k \in \{1,2\}\) and let \(s_k \in S_k\) satisfy \(\|s_k\| = 1\). 
  Note that
  \[
    s(S_k, V) = \|P_{V^\bot}(s_k)\|\quad\text{and}\quad
    \delta(S_k, V \oplus W) = \|P_{(V \oplus W)^\bot}(s_k)\|
  \]
  because both $S_k$ and $V$ are one-dimensional. From
  \[
    N(P_{V^\bot}) = V \subseteq V \oplus W = N(P_{(V \oplus W)^\bot})
  \]
  we may conclude that \(P_{(V \oplus W)^\bot} = P_{(V \oplus W)^\bot} P_{V^\bot}\).
  Since the subspaces \(S_1, S_2, V\) are pairwise distinct and \(S_1 \subseteq S_2 \oplus V\), we may pick  \(c \in \R\) and \(v \in V\) such that \(s_1 = c s_2 + v\). Then,
  \[
    P_{V^\bot}(s_1) = P_{V^\bot}(c s_2) + P_{V^\bot}(v) = c \: P_{V^\bot}(s_2).
  \] 
  Comparing the norms of these expressions, we see that \(P_{V^\bot}(s_1) = c \: P_{V^\bot}(s_2)\) with a number $c$ satisfying \(|c| = \frac{s(S_1, V)}{s(S_2, V)}\).
  Therefore, we may conclude that
  \begin{equation*}
    \begin{split}
      \delta(S_1, V \oplus W) &= \|P_{(V \oplus W)^\bot}(s_1)\| = \|P_{(V \oplus W)^\bot}P_{V^\bot}(s_1)\| \\
      &= \|c \: P_{(V \oplus W)^\bot} P_{V^\bot}(s_2) \|= |c| \|\: P_{(V \oplus W)^\bot}(s_2) \|\\
      &=  \frac{s(S_1, V)}{s(S_2, V)} \: \delta(S_2, V \oplus W).
    \end{split}
  \end{equation*}
  Finally, as \((S_1 \oplus S_2) \cap W = \{0\}\) and hence \(S_2 \nsubseteq V \oplus W\), we have  \(\delta(S_2, V \oplus W) \neq 0\) which finishes the proof.
\end{proof}

\begin{proposition}\label{prop:3d_formula}
  Let \(P_1, P_2: \R^3 \rightarrow \R^3\) be two projections with two-dimensional images. There is at most one non-trivial eigenvalue and if it exists it satisfies the equation
  \begin{equation*}
    |\lambda| = \frac{\delta(N_2, R_1) \delta(N_1, R_2)}{\delta(N_1, R_1) \delta(N_2, R_2)}.
  \end{equation*}
  In particular, the iterates $(P_1P_2)^{n}$ converge if and only if 
  \[
    \frac{\delta(N_2, R_1) \delta(N_1, R_2)}{\delta(N_1, R_1) \delta(N_2, R_2)} < 1,
  \]
  that is, the convergence is determined by the angles between the ranges and kernels.
\end{proposition}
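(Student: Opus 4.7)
The plan is to reduce the three-dimensional situation to the two-dimensional one already settled in Proposition~\ref{prop:2d} by passing to the distinguished plane $E_v$ supplied by Lemma~\ref{lemma:eigenvector_geometric}, and then to convert the $s$-ratios that arise inside $E_v$ into the ambient directed-distance ratios appearing in the statement by invoking Lemma~\ref{lemma:gap_ratio_constant}.

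First, I would address uniqueness of the non-trivial eigenvalue. Suppose $\lambda\notin\{0,1\}$ is an eigenvalue of $P_1P_2$ with eigenvector $v$. Lemma~\ref{lemma:eigenvector_geometric} supplies a two-dimensional plane $E_v$ whose intersections with $R_1,R_2,N_1,N_2$ are all one-dimensional and pairwise trivial. Because $\dim R_k=2$ forces $\dim N_k=1$, the intersections $N_k\cap E_v$ \emph{are} $N_k$, so $N_1,N_2\subseteq E_v$; the pairwise triviality then gives $N_1\neq N_2$ and $E_v=N_1\oplus N_2$. Thus $E_v$ is uniquely determined, and Proposition~\ref{prop:2d} applied to the induced two-dimensional projections on $E_v$ (with one-dimensional ranges $R_k\cap E_v$) leaves at most one non-zero eigenvalue for the restricted composition, giving uniqueness.

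For the numerical formula, set $R_k':=R_k\cap E_v$ and combine Lemma~\ref{lemma:nontrivial_ev_formula} with the modulus statement of Proposition~\ref{prop:2d} to get
\[
  |\lambda| \;=\; \frac{s(R_1',N_2)\,s(R_2',N_1)}{s(R_1',N_1)\,s(R_2',N_2)}.
\]
I now convert each ratio $s(R_k',N_i)/s(R_k',N_j)$ into a directed-distance ratio via Lemma~\ref{lemma:gap_ratio_constant}. For the first ratio I take $S_1=N_2$, $S_2=N_1$, $V=R_1'$ and $W$ any one-dimensional complement of $R_1'$ inside $R_1$, so that $V\oplus W=R_1$. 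The hypotheses hold: $R_1'\subseteq E_v=N_1\oplus N_2$, and $W\cap E_v\subseteq W\cap R_1\cap E_v=W\cap R_1'=\{0\}$. This yields $s(R_1',N_2)/s(R_1',N_1)=\delta(N_2,R_1)/\delta(N_1,R_1)$. A symmetric application with $R_1\leftrightarrow R_2$ and $N_1\leftrightarrow N_2$ handles the second ratio, and multiplying produces the desired $\delta$-formula.

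For the convergence dichotomy, I would argue that $\im(P_1P_2)\subseteq R_1$ is two-dimensional and $P_1P_2$ fixes $R_1\cap R_2$, which is non-trivial by a dimension count in $\R^3$; hence the spectrum of $P_1P_2$ is exhausted by $\{0,1,\lambda\}$. A short separate argument shows that the generalised $1$-eigenspace coincides with the $1$-eigenspace (no Jordan blocks at $1$ can develop for a product of two projections), so the powers $(P_1P_2)^n$ converge precisely when $|\lambda|<1$. The main obstacle, as I see it, is essentially bookkeeping: namely verifying the hypotheses of Lemma~\ref{lemma:gap_ratio_constant} in each of the two applications; everything else is already packaged into the preceding lemmas.
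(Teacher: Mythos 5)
Your core argument is the same as the paper's: you pass to the unique admissible plane $E_v=N_1\oplus N_2$, apply Lemma~\ref{lemma:nontrivial_ev_formula} (with $N_k'=N_k$ because the kernels are one-dimensional), and convert the two $s$-ratios into $\delta$-ratios via Lemma~\ref{lemma:gap_ratio_constant}. The only difference is cosmetic: the paper takes the auxiliary subspace $W$ to be $Z=R_1\cap R_2$ (checking $R_k=R_k'\oplus Z$ by a dimension count), whereas you take an arbitrary one-dimensional complement of $R_k'$ inside $R_k$; your verification of the hypothesis $(N_1\oplus N_2)\cap W=\{0\}$ is correct, so this works equally well.

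However, your treatment of the convergence dichotomy contains a false auxiliary claim: it is \emph{not} true that eigenvalue $1$ of a product of two projections is always semisimple. Take $P_1(x,y,z)=(x,y,0)$ and $P_2=\id-mn^{T}$ with $m=(1,0,1)^{T}$, $n=(0,1,1)^{T}$ (the projection onto the plane $y+z=0$ along $\spa\{(1,0,1)\}$); both have two-dimensional images, yet
\begin{equation*}
  P_1P_2=\begin{pmatrix}1&-1&-1\\0&1&0\\0&0&0\end{pmatrix}
\end{equation*}
has eigenvalue $1$ with algebraic multiplicity $2$ but a one-dimensional eigenspace, so a Jordan block at $1$ does develop (here no non-trivial eigenvalue exists and the $\delta$-ratio equals $1$, so this does not contradict the proposition, only your claimed general principle). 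The good news is that you do not need any such principle in the case your formula covers: if a non-trivial eigenvalue $\lambda\notin\{0,1\}$ exists, then $0$ is an eigenvalue because $\operatorname{rank}(P_1P_2)\le 2$, and $1$ is an eigenvalue because $R_1\cap R_2\neq\{0\}$ is fixed pointwise; hence the $3\times 3$ matrix $P_1P_2$ has three distinct eigenvalues $0,1,\lambda$ and is diagonalisable, so $(P_1P_2)^n$ converges exactly when $|\lambda|<1$ (note $\lambda$ is real, so $|\lambda|=1$ with $\lambda\neq 1$ means $\lambda=-1$, which gives divergence). With that repair your write-up is complete; the paper itself stops at the eigenvalue formula and does not spell out this last step.
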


\begin{proof}    
  Assuming the existence of at least one non-trivial eigenvector $v$ with corresponding eigenvalue \(\lambda\), we see that $N_1\cap N_2=\{0\}$, the only possible choice of \(E_v\) is \(N_1 \oplus N_2\), and \(R_1 \cap R_2 \cap E_v = \{0\}\).
  We set
  \begin{equation*}
      R_1' = R_1 \cap E_v, \qquad R_2' = R_2 \cap E_v \qquad\text{and}\qquad Z = R_1 \cap R_2.
  \end{equation*}
  Then, by comparing dimensions, we observe that \(R_1 = R_1' \oplus Z\) and \(R_2 = R_2' \oplus Z\). 
  Using Lemmata~\ref{lemma:nontrivial_ev_formula} and~\ref{lemma:gap_ratio_constant}, we conclude that
  \begin{align*}
    |\lambda| &= |[R_1', R_2', N_2, N_1]|  = \frac{s(N_2, R_1')}{s(N_1, R_1')} \: \frac{s(N_1, R_2')}{s(N_2, R_2')} = \frac{\delta(N_2, R_1' \oplus Z)}{\delta(N_1, R_1' \oplus Z)} \: \frac{\delta(N_1, R_2' \oplus Z)}{\delta(N_2, R_2' \oplus Z)}\\
              &= \frac{\delta(N_2, R_1) \delta(N_1, R_2)}{\delta(N_1, R_1) \delta(N_2, R_2)},
  \end{align*}
  as claimed.
\end{proof}

\begin{remark}\label{rem:3dimRem}
  We conclude this section with a few observations concerning the validity of the above characterisation of convergence 
  of the iterates.
  \begin{enumerate}[wide, labelwidth=!, labelindent=0pt]
  \item Using the behaviour of the cross-ratio with respect to permutations of its arguments 
    stated in Lemma~\ref{lemma:cross-ratio-argument-switch}, we obtain that for the case of 
    two linear projections $P_1,P_2$ on $\mathbb{R}^{3}$ with one-dimensional ranges, we also see that there is at most one 
    non-trivial eigenvalue and if it exists, then it satisfies the equation
    \begin{equation*}
      |\lambda| = \frac{\delta(R_1, N_2) \delta(R_2,N_1)}{\delta(R_1,N_1) \delta(R_2,N_2)}.
    \end{equation*}
    In particular, the iterates $(P_1P_2)^{n}$ converge if and only if 
    \[
      \frac{\delta(R_1, N_2) \delta(R_2,N_1)}{\delta(R_1,N_1) \delta(R_2,N_2)} < 1.
    \]
    This shows that also in this case, convergence of the iterates is determined by the angles between 
    the ranges and the kernels.
  \item Copying the above arguments, it is possible to show the same characterisation for projections 
    on Hilbert spaces, where both have either one-dimensional images or one-dimensional kernels.
  \item The ``mixed case'', that is, the case where one projection has a one-dimensional range and the 
    other projection has a one-dimensional kernel, is more complicated. Let $P_1$ and $P_2$ be two projections on 
    $\mathbb{R}^{3}$, and assume that $P_1$ has a one-dimensional range and $P_2$ has a two-dimensional one. 
    Using the results of this section, we may conclude that there is a unique non-trivial eigenvalue $\mu$ 
    of
    \[
      P(\im P_1,\ker P_1)P(\ker P_2,\im P_2)
    \]
    and that the unique non-trivial eigenvalue of $P_1P_2$ is $\lambda = 1-\mu$. 
    So, if
    \begin{equation}\label{eq:cond}
      \frac{\delta(\im P_1, \ker P_2) \delta(\im P_2, \ker P_1)}{\delta(\im P_1,\ker P_1) \delta(\im P_2, \ker P_2)} < 1.
    \end{equation}
    and the cross-ratio $[\im P_1, \ker P_2, \im P_2, \ker P_1]>0$, we have convergence of the iterates $(P_1P_2)^{n}$. 
    In order to show that condition~\eqref{eq:cond} is not enough, we consider the following example.
    Take the vectors
    \begin{equation*}
      w_1 = \begin{pmatrix} 1 \\ -1 \\ 0 \end{pmatrix}, \quad
      w_2 = \begin{pmatrix} 1 \\ 1 \\ 0 \end{pmatrix}, \quad
      w_3 = \begin{pmatrix} 1 \\ 3 \\ 1 \end{pmatrix}, \quad
      w_4 = \begin{pmatrix} 0 \\ 1 \\ -1 \end{pmatrix} \quad\text{and}\quad
      w_4' = \begin{pmatrix} 1 \\ 0 \\ 1 \end{pmatrix}
    \end{equation*}
    and set \(S_i = \spa(w_i)\) for \(i = 1, 2\); \(S_i = \spa(w_i)^\bot\) 
    for \(i=3, 4\); \(S_4' = \spa(s_4')^\bot\). A direct computation shows that
    \[
      \Theta(S_4, S_i) = \Theta(S_4', S_i)\qquad\text{for}\qquad i=1,2,3.
    \]
    On the other hand, setting \(V_i = S_i \cap (S_1 \oplus S_2)\), \(V_4' = S_4' \cap (S_1 \oplus S_2)\), we get
    \begin{equation*}
      \begin{split}
        \frac12 = [V_1, V_2, V_3, V_4] = - [V_1, V_2, V_3, V_4'].
      \end{split}
    \end{equation*}
    Using the above arguments, we see that the sequence \(\seq{P(S_1, S_4)(P(S_3, S_2))^n}\) converges 
    whereas the sequence \(\seq{(P(S_1, S_4')P(S_3, S_2))^n}\) does not. Since all the principal angles are the same 
    in both cases, they do not determine the convergence behaviour on their own.
  \end{enumerate}
\end{remark}
\subsection{Higher dimensions: Angles are not enough}
For dimensions higher than three, a characterisation using only the principal angles cannot work. We show this 
by giving a counterexample. It is somewhat similar to the one given in Remark~\ref{rem:3dimRem}, but in contrast to 
the situation there, in higher dimensions it seems to be unclear how to overcome the problem. 
The counterexample is built by combining two two-dimensional examples in a specific way. 
In order to do this, we first need a simple observation on operator matrices.
For two operators \(T_1: H_1 \rightarrow H_1\), \(T_2: H_2 \rightarrow H_2\) on Hilbert spaces $H_1$ and $H_2$, 
we denote the operator matrix
\begin{equation*}
  \begin{split}
    \begin{bmatrix}
      T_1 & 0 \\
      0 & T_2
    \end{bmatrix}: 
    H_1 \oplus H_2 &\rightarrow H_1 \oplus H_2\\
    (h_1, h_2) &\mapsto (T_1(h_1), T_2(h_2)) 
  \end{split}
\end{equation*}
by \(T_1 \oplus T_2 \). We consider the case of four projections $P_1$ and $P_2$ on $H_1$, and $P_3$ and $P_4$ on $H_2$. 
Then $P_1\oplus P_3$ and $P_2\oplus P_4$ are projections on $H_1\oplus H_2$ satisfying
\begin{gather*}
  \im P_1\oplus P_3 =  \im P_1 \oplus \im P_3, \qquad \ker P_1\oplus P_3 =  \ker P_1 \oplus \ker P_3,\\
  \im P_2\oplus P_4 =  \im P_2 \oplus \im P_4 \quad\text{and}\quad \ker P_2\oplus P_4 =  \ker P_2 \oplus \ker P_4  
\end{gather*}
as can be seen by a direct computation. Moreover, the spectrum of $(P_1\oplus P_3)(P_2\oplus P_4)$ is just the union of 
the spectra of $P_1P_2$ and $P_2P_4$.

The final observation needed for the example is that principal angles between two direct sums of subspaces 
are nothing but the combined principal angles between the individual subspaces in both summands.

\begin{example}
  We construct a counterexample in \(\R^4\). More precisely, we construct four projections 
\(P_1, P_2, P_1', P_2'\) such that all the principal angles between the ranges and nullspaces of \(P_1, P_2\) 
on one hand, and \(P_1', P_2'\) on the other, are identical, but \(\rho(P_1 P_2) = 0\), 
while \(\rho(P_1' P_2') = 2\).

  First, we fix \(\varphi \in (0, \frac\pi2)\). For \(\theta \in (0, \frac\pi2)\), we set
  \[
    e(\theta) =
    \begin{pmatrix}
      \cos(\theta) \\
      \sin(\theta)
    \end{pmatrix}, \qquad S(\theta) = \spa(e(\theta))
    \qquad\text{and}\qquad
    \begin{pmatrix}
      x \\
      y
    \end{pmatrix}^\bot = 
    \begin{pmatrix}
      y \\
      -x
    \end{pmatrix}.
  \]
  Now let \(s = \pm 1\) and set
  \[
    P_1^1 = P(S(0), S(\frac\pi2 + \varphi))\qquad\text{and}\qquad P_{2, s}^1 = P(S(\frac\pi2 + s\varphi), S(\frac\pi2)).
  \]
  Since \(\im (P_{2, 1}^1) = \ker(P_1^1)\) we have \(P_1^1 P_{2, 1}^1 = 0\). 
Calculating the unique non-trivial eigenvalue of \(P_1^1 P_{2, -1}^1\), we get 
  \begin{align*}
    |\lambda| = \rho(P_1^1 P_{2, -1}^1) &= \abs{\frac{\langle e(0), e(\frac\pi2)^\bot \rangle \langle e(\frac\pi2 - \varphi), e(\frac\pi2 + \varphi)^\bot \rangle}{\langle e(0), e(\frac\pi2 + \varphi)^\bot \rangle \langle e(\frac\pi2 - \varphi), e(\frac\pi2)^\bot \rangle}} = \abs{\frac{\cos(\frac\pi2 - 2 \varphi)}{\cos(\varphi)\cos(\frac\pi2 - \varphi)}}\\
    & = \abs{\frac{\sin(2 \varphi)}{\cos(\varphi) \sin(\varphi)}}  
      = 2 \abs{\frac{\cos(\varphi)\sin(\varphi)}{\cos(\varphi)\sin(\varphi)}} = 2.
  \end{align*}  
  In order to construct the other pair of projections, we set
  \[
    P_1^2 = P(S(0), S(\frac\pi2 + \varphi))\qquad\text{and}\qquad P_{2, s}^2 = P(S(\frac\pi2 - s\varphi), S(0)).
  \]
  Since \(R(P_1^2) \subseteq N(P_{2, s}^2)\) we have \(\rho(P_1^2 P_{2, s}^2) = 0\).

  We combine these projections by setting
  \[
    P_1 := P_1^1\oplus P_1^2, \qquad\text{and}\qquad P_{2,s} := P_{2,s}^1\oplus P_{2,s}^2
  \]
  for $s=\pm 1$. Since the principal angles between direct sums of subspaces are just the combination 
of the principal angles between the individual spaces, the principal angles between the ranges and 
kernels of $P_1$ and $P_{2,-1}$ are the same as the ones between the ranges and kernels of $P_1$ and 
$P_{2,1}$. On the other hand, we have \(\rho(P_1 P_{2, 1}) = 0\) and \(\rho(P_1 P_{2, -1}) = 2\), 
that is, although the principal angles agree, nevertheless the convergence behaviour is vastly 
different.  
\end{example}
	
\section{Some remarks on angles between linear projections}

Let $P_1,P_2$ be two bounded linear projections in a Banach space. Recall that in order to define 
the Oppenheim angle between $P_1$ and $P_2$, we need a projection~$P_{12}$ which satisfies $P_{12}P_1 = P_{12}$ and 
$P_{12}P_2 = P_{12}$. As noted in Remark~2.6 in~\cite[p.~346]{Oppenheim2017} such a projection need not be unique. Since 
in~\cite{Oppenheim2017} no example is given, we now give a simple example illustrating this phenomenon.

\begin{example}
  We consider the projections
  \[
    P_1\colon\mathbb{R}^{3}\to\mathbb{R}^{3}, \qquad (x,y,z) \mapsto (x+y,0,z)
  \]
  and
  \[
    P_2\colon\mathbb{R}^{3}\to\mathbb{R}^{3}, \qquad (x,y,z) \mapsto (0,x+y,z).
  \]
  The intersection of the images of $P_1$ and of $P_2$ is the $z$-axis, the projections
  \[
    P_{12}\colon\mathbb{R}^{3}\to\mathbb{R}^{3}, \qquad (x,y,z) \mapsto (0,0,z)
  \]
  and
  \[
    P_{12}'\colon\mathbb{R}^{3}\to\mathbb{R}^{3}, \qquad (x,y,z) \mapsto (0,0,x+y+z)
  \]
  are both projections onto $\im P_1\cap \im P_2$. Observe that
  \[
    P_{12}P_1 = P_{12}, \qquad P_{12}P_{2} = P_{12}, \qquad P_{12}'P_1 = P_{12}' \qquad\text{and}\qquad P_{12}'P_{2} = P_{12}'.
  \]
  So both projections are admissible in the definition of the Oppenheim angle. A direct computation shows that, with $\|\cdot\|_1$ denoting the operator norm induced by the $\ell^1$ norm on $\mathbb{R}^3$,
  \[
    \|P_1(P_2-P_{12})\|_1 = \|P_2(P_1-P_{12})\|_1 =1
  \]
  but
  \[
    \|P_1(P_2-P_{12}')\|_1 = \|P_2(P_1-P_{12}')\|_1 =2
  \]
  This shows that these projections result in different values for the Oppenheim angle. 
For the Euclidean norm these two projections result in the same Oppenheim angle. Taking on the other hand
  \[
    P_{12}''\colon\mathbb{R}^{3}\to\mathbb{R}^{3}, \qquad (x,y,z) \mapsto (0,0, z+ 2x+2y)
  \]
  and $P_{12}'$, we obtain different angles for the Euclidean norm as well. Note that in the first case, 
we even have $\|P_{12}\|_{1}=\|P_{12}'\|_{1}=1$.
\end{example}

In infinite dimensional Banach spaces, even the question of whether two projections $P_1$ and $P_2$ 
are consistent, that is, if there is a projection $P_{12}$ onto the intersection of the ranges of 
$P_1$ and $P_2$ such that $P_{12}P_1 = P_{12}P_2 = P_{12}$, is of interest. 
Note that there are complemented subspaces with the property that their intersection is no longer 
complemented. In other words, it might happen that not only there is no projection satisfying 
the above condition, but that there is no bounded projection at all.

On the positive side, we can mention the following result of R.~E.~Bruck and S.~Reich:

\begin{proposition}[{Theorem~2.1 in~\cite[p.~464]{Reich}}]\label{prop:Reich}
  Let~$X$ be a uniformly convex space and let $P_{1},\ldots,P_{k}$ be linear norm-one projections 
onto subspaces $Y_1,\ldots,Y_k$. Then the strong limit $\lim_{n\to\infty} (P_{k}P_{k-1}\cdots P_{1})^nx$ exists 
for each $x\in X$ and defines a norm-one-projection onto the intersection $Y_1\cap\ldots\cap Y_k$.
\end{proposition}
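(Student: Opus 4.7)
The plan is to reduce the cyclic iteration to a single non-expansive linear operator and then use uniform convexity of~$X$ in two crucial places: to identify the fixed-point set, and to upgrade monotonicity of distances into strong convergence. Set $T := P_k P_{k-1}\cdots P_1$ and $Y := Y_1\cap\cdots\cap Y_k$. Since each $P_i$ has norm one, $T$ is linear with $\|T\|\le 1$ and fixes every $y\in Y$; hence for $x\in X$ and $y\in Y$ the sequence $\|T^n x - y\| = \|T^n(x-y)\|$ is non-increasing, $\{T^n x\}$ is bounded, and $\lim_n\|T^n x - y\|$ exists for every $y\in Y$.

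I would next identify $\operatorname{Fix}(T) = Y$. If $Tx = x$, then the chain
\[
  \|x\| = \|P_k\cdots P_1 x\|\le\|P_{k-1}\cdots P_1 x\|\le\cdots\le\|P_1 x\|\le\|x\|
\]
consists of equalities. The key point is a strict-convexity lemma: for a norm-one projection $P$, the equality $\|Pu\|=\|u\|$ forces $u\in\im P$. Writing $u = Pu + v$ with $v\in\ker P$, the convex function $t\mapsto\|Pu+tv\|$ satisfies $\|Pu+tv\|\ge \|P(Pu+tv)\| = \|Pu\|$ for every~$t$, together with the endpoint equality $\|Pu+v\| = \|u\| = \|Pu\|$, so it is constant on $[0,1]$; strict convexity (implied by uniform convexity) then forces $v=0$. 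Applied inductively to the chain above, this yields $x\in Y_i$ for every $i$, hence $x\in Y$.

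The core analytical step is strong convergence of $\{T^n x\}$. I would first establish asymptotic regularity $T^n x - T^{n+1}x\to 0$ as a quantitative counterpart of the above: uniform convexity guarantees that for a norm-one projection $P$ and a bounded sequence $\{u_n\}$, the condition $\|u_n\| - \|Pu_n\|\to 0$ implies $\|u_n - Pu_n\|\to 0$. Telescoping along the $k$ projections composing $T$, applied to $u_n := T^n(x-y)$---whose norms decrease to a limit---then yields $T^n x - T^{n+1}x\to 0$. Strong convergence of $\{T^n x\}$ itself would follow from an asymptotic-centre argument: the bounded sequence $\{T^n x\}$ has a unique asymptotic centre $z^\ast$ in the closed convex set~$Y$, and combining the monotonicity of $\|T^n x - y\|$ for $y\in Y$, the asymptotic regularity just established, and the demiclosedness at zero of $\mathrm{Id}-T$ for the non-expansive linear operator~$T$, one concludes $T^n x\to z^\ast$ in norm. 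This is the step I expect to be the main obstacle, since it is precisely here that Hilbert-space intuition fails to transfer directly and one must invoke the substantial functional-analytic machinery of asymptotic centres and demiclosedness which is the genuine contribution of the Bruck--Reich paper. Finally, setting $Qx := z^\ast$, linearity of~$Q$ follows from linearity of each $T^n$ together with the uniform bound $\|T^n\|\le 1$; the identity $TQx = Qx$ (by continuity of~$T$) together with $\operatorname{Fix}(T)=Y$ shows $Qx\in Y$, while $Qy = y$ for $y\in Y$ gives $Q^2 = Q$; assuming $Y\ne\{0\}$ (the statement being vacuous otherwise), $\|Q\|\ge 1$ and $\|Q\|\le\sup_n\|T^n\|\le 1$, so $\|Q\| = 1$.
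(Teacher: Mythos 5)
First, a point of reference: the paper does not prove this proposition at all --- it is quoted as Theorem~2.1 of Bruck and Reich~\cite{Reich} and used as a black box --- so there is no in-paper proof to compare against; your attempt can only be measured against the known argument. Much of your outline is sound and is indeed in the spirit of the Bruck--Reich ``strongly nonexpansive'' machinery: the reduction to $T=P_k\cdots P_1$ with $\|T\|\le 1$, the identification $\operatorname{Fix}(T)=Y_1\cap\cdots\cap Y_k$ via the strict-convexity lemma (your argument for ``$\|Pu\|=\|u\|$ forces $Pu=u$'' is correct), and the asymptotic-regularity step based on the fact that in a uniformly convex space a norm-one projection $P$ satisfies: $(u_n)$ bounded and $\|u_n\|-\|Pu_n\|\to 0$ imply $u_n-Pu_n\to 0$. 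All of this is correct.

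The genuine gap is exactly where you suspect it: the passage from asymptotic regularity to \emph{strong} convergence of $(T^nx)$. The asymptotic-centre/demiclosedness scheme you invoke yields at best weak subsequential limits lying in $Y$ (demiclosedness is trivial here since $T$ is linear), and even uniqueness of the weak limit is not automatic in a general uniformly convex space (Opial's property fails in $L^p$, $p\neq 2$); moreover, strong convergence genuinely fails for general nonexpansive --- even firmly nonexpansive --- iterations in Hilbert space, so no argument that ignores linearity at this point can work. The missing idea is to use linearity (oddness suffices) to bound the midpoints from below: with $L=\lim_n\|T^nx\|$, for fixed $n,m$ and every $j$,
\begin{equation*}
  \Bigl\|\tfrac{T^nx+T^mx}{2}\Bigr\|\;\ge\;\Bigl\|T^j\Bigl(\tfrac{T^nx+T^mx}{2}\Bigr)\Bigr\|\;=\;\Bigl\|\tfrac{T^{n+j}x+T^{m+j}x}{2}\Bigr\|\;\ge\;\|T^{m+j}x\|-\tfrac12\,\|T^{n+j}x-T^{m+j}x\|,
\end{equation*}
and the right-hand side tends to $L$ as $j\to\infty$ by the asymptotic regularity you established; hence every midpoint has norm at least $L$, while $\|T^nx\|,\|T^mx\|\to L$, and uniform convexity then forces $(T^nx)$ to be a Cauchy sequence (the case $L=0$ being trivial). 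With this replacement for your fourth step, the remaining bookkeeping (linearity of the limit map $Q$, $Q^2=Q$, $\im Q=Y$, $\|Q\|=1$ when $Y\neq\{0\}$) goes through as you wrote it.
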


Using this proposition together with the uniqueness of norm-one projections in smooth spaces, 
we obtain the following simple result:

\begin{proposition}
  Let $X$ be a uniformly convex and smooth Banach space and let $P_1$ and $P_2$ be two norm-one 
projections in $X$. Then these projections are consistent, that is, there is a projection $P_{12}$ 
onto the 
intersection of the ranges of $P_1$ and $P_2$ with the property that $P_{12}P_1 = P_{12}P_2 = P_{12}$.
\end{proposition}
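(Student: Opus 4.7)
The plan is to combine Proposition~\ref{prop:Reich} with the classical fact that in a smooth Banach space there is at most one norm-one linear projection onto any given closed subspace. This uniqueness is what will allow the two orderings $P_1P_2$ and $P_2P_1$ to produce the \emph{same} limit projection, which is exactly what is needed to obtain a consistency projection.

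First, I would apply Proposition~\ref{prop:Reich} to the pair $(P_1, P_2)$ in the order giving iterates $(P_2P_1)^n$: this yields a strong limit
\[
  P_{12}x := \lim_{n\to\infty}(P_2P_1)^n x
\]
which is a norm-one projection onto $\im P_1\cap \im P_2$. Applying the same proposition to the reversed ordering yields a second strong limit
\[
  Qx := \lim_{n\to\infty}(P_1P_2)^n x,
\]
which is likewise a norm-one projection onto the \emph{same} subspace $\im P_1\cap\im P_2$. By the uniqueness of norm-one projections onto a fixed subspace in a smooth Banach space, we conclude that $Q = P_{12}$.

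The consistency relations now follow from a direct computation with the iterates. Since $P_1^2=P_1$, an easy induction gives
\[
  (P_2P_1)^n P_1 = (P_2P_1)^n \qquad\text{for all } n\ge 1,
\]
and passing to the strong limit yields $P_{12}P_1 = P_{12}$. Analogously, using $P_2^2 = P_2$,
\[
  (P_1P_2)^n P_2 = (P_1P_2)^n,
\]
so in the strong limit $QP_2 = Q$; combined with $Q=P_{12}$ this gives $P_{12}P_2 = P_{12}$.

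The only non-trivial ingredient beyond Proposition~\ref{prop:Reich} is the uniqueness of norm-one projections onto a fixed subspace of a smooth Banach space; this is a standard consequence of smoothness (the unique supporting functional at each point of the unit sphere forces any two norm-one projections onto $Y$ to agree on the kernel direction of each $x\notin Y$). Once this uniqueness is invoked, the argument is almost formal: the two orderings of the alternating algorithm produce the same limit projection, and each ordering absorbs one of $P_1,P_2$ on the right. No obstacle of substance remains.
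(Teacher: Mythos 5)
Your proposal is correct and follows essentially the same route as the paper: apply Proposition~\ref{prop:Reich} to both orderings of the alternating iterates, absorb $P_1$ (resp.\ $P_2$) into the corresponding limit using $P_k^2=P_k$, and invoke the uniqueness of norm-one projections onto a given subspace of a smooth space (the paper cites Cohen--Sullivan for this) to identify the two limits. No gaps.
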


\begin{proof}
  By Proposition~\ref{prop:Reich}, the limits
  \[
    P_{12}x = \lim_{n\to\infty} (P_1P_2)^{n}x \qquad \text{and}\qquad P_{12}'x = \lim_{n\to\infty} (P_2P_1)^{n}x
  \]
  both define a norm-one projection onto $\im P_1\cap \im P_2$. These projections satisfy
  \[
    P_{12}'P_1x = \lim_{n\to\infty} (P_2P_1)^{n}P_1x = \lim_{n\to\infty} (P_2P_1)^{n-1}P_2P_1^2x = \lim_{n\to\infty} (P_2P_1)^{n}x = P_{12}'x
  \]
  and
  \[
    P_{12}P_2x = \lim_{n\to\infty} (P_1P_2)^{n}P_2x = \lim_{n\to\infty} (P_1P_2)^{n-1}P_1P_2^2x 
    = \lim_{n\to\infty} (P_1P_2)^{n}x = P_{12}x
  \]
  for all $x \in X$. Since in smooth Banach spaces norm-one projections are unique (see, for example, Theorem~6 in~\cite[p.~356]{CS1970:Projecting}), we necessarily have $P_{12}=P_{12}'$ 
  and this projection has the required properties.
\end{proof}

Note that we cannot drop the assumption that $X$ is smooth. This can be seen in the following four-dimensional example.

\begin{example}
  We consider the space $\mathbb{R}^{4}$ equipped with the norm
  \[
    \|(x,y,z,w)\|=\sqrt{x^2+y^2+z^2+w^2}+|x|+|y|+|z|+|w|
  \]
  which turns it into a uniformly convex but non-smooth space. Take the projections
  \[
    P_1(x,y,z,w)=\left(x,y-\frac{z}{4},0,0\right)\qquad\text{and}\qquad P_2(x,y,z,w) = \left(0,y,0,w\right)
  \]
  which, by
  \begin{align*}
    \|P_1(x,y,z,w)\| & = \sqrt{x^2+\left(y-\frac{z}{4}\right)^2} + |x| + \left|y-\frac{z}{4}\right|\\
                     & \leq \sqrt{x^2+y^2}+\frac{\sqrt{z^2}}{4} +|x| +|y|+\frac{|z|}{4} \\
                     & \leq \sqrt{x^2+y^2+z^2+w^2} + |x|+|y|+\frac{|z|}{2}+|w|
  \end{align*}
  are both norm-one projections. Moreover,
  \[
    (P_1P_2)(x,y,z,w) = \left(0,y,0,0\right)\qquad\text{and}\qquad (P_2P_1)(x,y,z,w) = \left(0,y-\frac{z}{4},0,0\right)
  \]
  and since both elements are already in the intersection of the ranges, the sequence of iterates is constant in both cases. Hence we get
  \[
    \lim_{n\to\infty}(P_1P_2)^{n}\neq \lim_{n\to\infty}(P_2P_1)^{n}.
  \]
  Moreover, note that neither one of these projections has the properties required in the definition of the Oppenheim angle.  
\end{example}

\text{}

\noindent\textbf{Acknowledgments.} The authors would like to thank an anonymous referee for carefully reading the manuscript and for the valuable suggestions.
The third author was partially supported by
the Israel Science Foundation (Grants No. 389/12 and 820/17), the
Fund for the Promotion of Research at the Technion and by the
Technion General Research Fund.


\end{document}